\newtheorem{theorem}{Theorem}
\newtheorem{corollary}{Corollary}
\newtheorem{example}{Example}
\newcommand{\R}{\mbox{$\mathbb{R}$}}
\newcommand{\C}{\mbox{$\mathbb{C}$}}
\newcommand{\F}{\mbox{$\mathbb{F}$}}
\begin{document}

\title[Continuity and Discontinuity of Seminorms]
{Continuity and Discontinuity of Seminorms\\ on Infinite-Dimensional Vector Spaces. II}

\author[Jacek Chmieli\'nski]{Jacek Chmieli\'nski}
\address{Department of Mathematics, Pedagogical University of Krak\'ow\\ Krak\'{o}w, Poland\\
E-mail: jacek.chmielinski@up.krakow.pl}

\author[Moshe Goldberg]{Moshe Goldberg$^{1}$}\thanks{$^{1}$Corresponding author}
\address{Department of Mathematics, Technion -- Israel Institute of Technology\\ Haifa, Israel\\
E-mail: mg@technion.ac.il}

\begin{abstract}
In this paper we extend our findings in \cite{ChG} and answer further questions regarding continuity and discontinuity of seminorms on infinite-dimensional vector spaces.
\end{abstract}

\keywords{infinite-dimensional vector spaces, Banach spaces, seminorms, norms, norm-topologies, continuity, discontinuity}
\subjclass[2010]{15A03, 47A30, 54A10, 54C05}

\maketitle

Throughout this paper let $X$ be a vector space over a field $\F$, either $\R$ or $\C$. As usual, a~real-valued function $N\colon X\to\R$ is a {\em norm} on $X$ if for all $x,y\in X$ and $\alpha\in\F$,
$$
\begin{array}{l}
N(x)>0,\quad x\neq 0,\\
N(\alpha x)=|\alpha| N(x),\\
N(x+y)\leq N(x)+N(y).
\end{array}
$$

Furthermore, a real-valued function $S\colon X\to\R$ is called a {\em seminorm} if for all $x,y\in X$ and $\alpha\in\F$,
$$
\begin{array}{l}
S(x)\geq 0,\\
S(\alpha x)=|\alpha| S(x),\\
S(x+y)\leq S(x)+S(y);
\end{array}
$$
hence, a norm is a positive-definite seminorm.

Using standard terminology, we say that a seminorm $S$ is {\it proper} if $S$ does not vanish identically and
$S(x)=0$ for some $x\neq 0$ or, in other words, if
$$
\ker S := \{x\in X:\ S(x) = 0\},
$$
is a nontrivial proper subspace of $X$.

Lastly, just as for norms, we say that seminorms $S_1$ and $S_2$ are {\it equivalent} on $X$, if there exist positive
constants $\beta\leq \gamma$ such that for all $x\in X$,
$$
\beta S_1(x) \leq S_2(x) \leq \gamma S_1(x).
$$

We recall that if $X$ is finite-dimensional, then all norms on $X$ are equivalent, thus giving rise to a unique norm-topology on $X$. This well-known fact leads to the following result.

\begin{theorem}[\cite{Go}]\label{t1}
Let $S$ be a seminorm on a finite-dimensional vector space $X$ over $\F$. Then $S$ is continuous with respect to the unique norm-topology on $X$.
\end{theorem}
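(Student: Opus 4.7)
The plan is to reduce continuity of $S$ to a single Lipschitz-type estimate and then invoke the equivalence of norms in finite dimension.

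First I would fix a basis $e_1, \ldots, e_n$ of $X$ and, for $x = \sum_{i=1}^n \alpha_i e_i$, use subadditivity and absolute homogeneity of $S$ to obtain
\[
S(x) \;\leq\; \sum_{i=1}^n |\alpha_i|\, S(e_i) \;\leq\; M \sum_{i=1}^n |\alpha_i|,
\]
where $M := \max_{1 \leq i \leq n} S(e_i)$. The right-hand side is a constant multiple of the $\ell^1$-norm $\|x\|_1 := \sum_i |\alpha_i|$ of $x$ relative to the chosen basis, so I obtain $S(x) \leq M \|x\|_1$ for every $x \in X$.

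Next I would upgrade this pointwise bound to continuity via the reverse triangle inequality. For all $x, y \in X$, subadditivity yields $S(x) \leq S(x-y) + S(y)$ and $S(y) \leq S(y-x) + S(x)$, and since $S(y-x) = S(x-y)$ by absolute homogeneity, one concludes
\[
|S(x) - S(y)| \;\leq\; S(x - y) \;\leq\; M \, \|x - y\|_1.
\]
Thus $S$ is Lipschitz continuous with respect to the norm $\|\cdot\|_1$.

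Finally I would invoke the recalled fact that all norms on a finite-dimensional space are equivalent: the unique norm-topology on $X$ is the one induced by $\|\cdot\|_1$, so continuity of $S$ with respect to $\|\cdot\|_1$ is exactly continuity with respect to the unique norm-topology on $X$. No step here is genuinely hard; the only subtlety worth flagging is that the constant $M$ may vanish (if $S \equiv 0$ on the basis, hence on $X$), in which case the bound $S \equiv 0$ trivially gives continuity, so the argument goes through without a separate case analysis.
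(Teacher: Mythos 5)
Your proof is correct and complete: the bound $S(x)\leq M\|x\|_1$, the reverse triangle inequality $|S(x)-S(y)|\leq S(x-y)$, and the equivalence of all norms in finite dimension together give exactly what is claimed, and your remark that the case $M=0$ needs no separate treatment is right. The paper does not reproduce a proof of this theorem (it cites \cite{Go}), but your argument is the standard one given there, so there is nothing to compare beyond noting the match.
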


Unlike the finite-dimensional case, if $X$ is infinite-dimensional then not all norms on $X$ are equivalent, and accordingly we no longer have a unique norm-topology. Indeed, in our previous paper \cite{ChG} we explored continuity and discontinuity of seminorms when the assumption of finite-dimensionality was removed. Our two main findings were:

\begin{theorem}[{\cite[Theorem 2]{ChG}}]\label{t2}
Let $X$, an infinite-dimensional vector space over $\F$, be equipped with a seminorm $S$ and a norm $N$. Then:
\begin{enumerate}[label={\rm (\alph*)}]
\item\label{t2a}
$S$ is ubiquitously continuous in $X$ with respect to the topology induced by $N$ if and only if there exists some point of $X$ at which $S$ is continuous.
\item\label{t2b}
Similarly, $S$ is ubiquitously discontinuous in $X$ with respect to the above mentioned topology if and only
if there exists some point of $X$ at which $S$ is discontinuous.
\end{enumerate}
\end{theorem}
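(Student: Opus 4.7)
The plan is to prove part (a); part (b) then follows by contraposition, because the biconditional in (a) negates precisely to the assertion that $S$ fails to be continuous at some point if and only if it fails to be continuous at every point.

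For (a) the forward direction is immediate, so I concentrate on the converse: assuming $S$ is continuous at some $x_0 \in X$, I aim to show $S$ is continuous at an arbitrary $y \in X$. My strategy is to proceed in three stages: (i) $S$ is bounded on some $N$-neighborhood of $0$; (ii) $S$ is continuous at $0$; (iii) $S$ is continuous at $y$. For (i), continuity at $x_0$ furnishes a $\delta > 0$ with $|S(x_0 + u) - S(x_0)| < 1$ whenever $N(u) < \delta$, and subadditivity then yields $S(u) \leq S(x_0 + u) + S(x_0) \leq 2 S(x_0) + 1$ on that $\delta$-ball. For (ii), I rescale: if $S \leq M$ on $\{u : N(u) < \delta\}$, then for any $\epsilon > 0$ and any $u$ with $N(u) < \delta \epsilon / M$ the vector $(M/\epsilon) u$ lies in that ball, so positive homogeneity gives $S(u) = (\epsilon / M) S((M/\epsilon) u) \leq \epsilon$. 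Stage (iii) is the standard remark that the reverse triangle inequality $|S(x) - S(y)| \leq S(x - y)$, built into every seminorm, propagates continuity at $0$ to continuity at every point.

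The one step that is not entirely routine is the passage from (i) to (ii): pure subadditivity gives only local boundedness of $S$ near $0$ by a quantity on the order of $2 S(x_0)$, not that $S(u) \to 0$. It is precisely positive homogeneity that converts this fixed bound on a fixed neighborhood into an arbitrarily small bound on a correspondingly smaller neighborhood. I also note that the argument is purely formal and nowhere uses the infinite-dimensionality of $X$; in the finite-dimensional case the conclusion is already trivial by Theorem \ref{t1}, so the genuine content of Theorem \ref{t2} lies in the infinite-dimensional setting, where it asserts that continuity of a seminorm, although no longer automatic, is nevertheless a global phenomenon.
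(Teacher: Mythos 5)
Your argument is correct and complete: local boundedness of $S$ near $0$ via subadditivity, the homogeneity rescaling to get continuity at $0$, and the inequality $|S(x)-S(y)|\leq S(x-y)$ to propagate it everywhere, with part \ref{t2b} obtained as the contrapositive of part \ref{t2a}. Note that the present paper does not prove this statement at all --- it is quoted from \cite{ChG} as a known result --- but your proof is the standard one for it, and you are right both that the rescaling step is where the real work happens and that finite-dimensionality plays no role in the equivalence itself.
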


\begin{theorem}[{\cite[Theorem 3]{ChG}}]\label{t3}
Let $S\neq 0$ be a seminorm on an infinite-dimensional vector space $X$ over $\F$. Then:
\begin{enumerate}[label={\rm (\alph*)}]
\item\label{t3a}
There exists a norm with respect to which S is ubiquitously continuous in X.
\item\label{t3b}
There exists a norm with respect to which S is ubiquitously discontinuous in X.
\end{enumerate}
\end{theorem}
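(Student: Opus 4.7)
The plan is to invoke Theorem~\ref{t2} in both halves: part~(a) reduces to finding a norm in which $S$ is continuous at one point of $X$, and part~(b) to finding a norm in which $S$ is discontinuous at one point; in both cases the convenient test point is $0$, where $S$ vanishes. For part~(a) I would first recall that every vector space carries a norm, for instance the $\ell^1$ coordinate norm $N_0$ with respect to any Hamel basis, and then set $N := N_0 + S$. This $N$ is a norm because $N(x) \geq N_0(x) > 0$ for $x \neq 0$, and the reverse triangle inequality $|S(x) - S(y)| \leq S(x-y) \leq N(x-y)$ shows $S$ is $1$-Lipschitz, hence continuous at $0$; Theorem~\ref{t2}\ref{t2a} then yields ubiquitous continuity.

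For part~(b) the idea is to ``tilt'' a Hamel basis so that a vector of unit seminorm becomes cheap in the resulting coordinate norm. Since $S \neq 0$, I can pick $u \in X$ with $S(u) = 1$; since $X$ is infinite-dimensional, I can adjoin a linearly independent sequence $(v_n)$ and, after rescaling $v_n \mapsto v_n/\max\{1, S(v_n)\}$, arrange $S(v_n) \leq 1$ for every $n$. I then extend $\{u, v_1, v_2, \dots\}$ to a Hamel basis $B$ of $X$ and replace each $v_n$ by $c_n := n u + v_n$; since $v_n = c_n - n u$, the new set $B' := \{u\} \cup \{c_n\}_{n \geq 1} \cup \bigl(B \setminus \{u, v_1, v_2, \dots\}\bigr)$ is again a Hamel basis. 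Let $N$ be the $\ell^1$ norm with respect to $B'$. Then $x_n := c_n/n$ satisfies $N(x_n) = 1/n \to 0$, whereas
\[
S(x_n) = S\!\left(u + \frac{v_n}{n}\right) \geq S(u) - \frac{S(v_n)}{n} \geq 1 - \frac{1}{n},
\]
so $S(x_n) \not\to 0 = S(0)$. Hence $S$ is discontinuous at $0$ with respect to $N$, and Theorem~\ref{t2}\ref{t2b} promotes this to ubiquitous discontinuity.

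The main technical point is to verify that $B'$ really is a Hamel basis, which is a routine finite-support linear-algebra check showing that the substitution $v_n \leftrightarrow c_n$ is invertible on any finite subset it touches; beyond that, the argument is just packaging, with the only subtle ingredient being the normalization $S(v_n) \leq 1$, which prevents the perturbation $v_n/n$ from swamping the unit contribution of $u$ in the estimate for $S(x_n)$.
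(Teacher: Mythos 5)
Your proof is correct: part (a)'s majorizing norm $N_0+S$ and part (b)'s tilted Hamel basis $c_n=nu+v_n$ (with the normalization $S(v_n)\le 1$ and the reverse triangle inequality) both check out, and the reduction to behaviour at zero via Theorem~\ref{t2} is exactly the intended mechanism. The paper only quotes Theorem~\ref{t3} from \cite{ChG} without reproducing its proof, but your argument is in precisely the same spirit as the cognate construction given here for Theorem~\ref{t4}, where a rescaled Hamel basis likewise produces a coordinate norm witnessing discontinuity at zero.
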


As it is, the above results trigger several new questions which are the main business of the present paper.

Having a second look at Theorem \ref{t1}, we begin our quest by asking whether there exists an infinite-dimensional normed space $X$ which resembles to some extent the behavior of the finite-dimensional case in the sense that {\it all} seminorms on that space are continuous with respect to the given norm topology in $X$. As
we shall see next, the answer to this question is negative:

\begin{theorem}\label{t4}
Let $N$ be a norm on an infinite-dimensional vector space $X$ over $\F$. Then there exist a norm $N'$ as well as a proper seminorm $S$ which are ubiquitously discontinuous in $X$ with respect to the norm topology induced by $N$.
\end{theorem}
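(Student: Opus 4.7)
\medskip

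\textbf{Proof plan for Theorem \ref{t4}.}
By Theorem \ref{t2}\ref{t2b}, it suffices to exhibit a norm $N'$ and a proper seminorm $S$ on $X$ that are each merely discontinuous at a single point with respect to the $N$-topology; for convenience we aim for discontinuity at $0$. In turn, discontinuity at $0$ will be certified by producing one sequence $\{e_n\}\subset X$ with $N(e_n)\to 0$ but $N'(e_n)\not\to 0$ and $S(e_n)\not\to 0$.

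\textbf{Setup.}
Since $X$ is infinite-dimensional, choose a linearly independent sequence and rescale so that the resulting vectors $e_1,e_2,\dots$ satisfy $N(e_n)=1/n$; in particular $e_n\to 0$ in the $N$-topology. Extend $\{e_n\}_{n\geq 1}$ to a Hamel basis $B$ of $X$. I then split $B=B_1\cup B_2$ into two disjoint nonempty pieces in such a way that $B_1$ still contains a subsequence $\{e_{n_k}\}$ with $N(e_{n_k})\to 0$ and $B_2\neq\emptyset$. If the extension has produced any basis vector outside $\{e_n\}$, set $B_1=\{e_n\}_{n\geq 1}$ and let $B_2$ be the rest; otherwise (when $B=\{e_n\}$) take $B_1=\{e_{2k}\}_{k\geq 1}$ and $B_2=\{e_{2k-1}\}_{k\geq 1}$.

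\textbf{Construction.}
Each $x\in X$ has a unique finite representation $x=\sum_{b\in B}\alpha_b(x)\,b$. Define
\[
N'(x):=\sum_{b\in B}|\alpha_b(x)|,\qquad S(x):=\sum_{b\in B_1}|\alpha_b(x)|.
\]
Absolute homogeneity and subadditivity are immediate from the corresponding properties of $|\cdot|$ on $\F$, while $N'(x)>0$ for $x\neq 0$ follows since at least one coefficient $\alpha_b(x)$ is nonzero. Hence $N'$ is a norm and $S$ is a seminorm. Properness of $S$ is clear: $S(e_{n_1})=1$, so $S\not\equiv 0$, while any nonzero vector in $\mathrm{span}(B_2)$ lies in $\ker S$, so $\ker S$ is a nontrivial proper subspace of $X$.

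\textbf{Discontinuity.}
Along the chosen subsequence one has $N(e_{n_k})=1/n_k\to 0$, yet $N'(e_{n_k})=1$ and $S(e_{n_k})=1$ for every $k$, since each $e_{n_k}$ is itself an element of $B_1\subseteq B$. Thus neither $N'$ nor $S$ is continuous at $0$, and Theorem \ref{t2}\ref{t2b} upgrades this to ubiquitous discontinuity on all of $X$.

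\textbf{Main obstacle.}
The only subtlety is the combinatorial one in the setup step: one must ensure that the Hamel basis can be partitioned so that $B_2$ is nonempty (needed for $\ker S\neq\{0\}$) while $B_1$ retains a sequence along which $N$ tends to $0$ (needed for the discontinuity witness). The case split above handles this cleanly in both the countable and uncountable Hamel-basis regimes; after that, everything reduces to routine verification.
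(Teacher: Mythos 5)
Your proposal is correct and follows essentially the same route as the paper: build a Hamel basis containing a sequence of basis vectors that is $N$-null, define $N'$ and $S$ as coefficient sums (with $S$ omitting part of the basis), witness discontinuity at zero along that sequence, and invoke Theorem \ref{t2}\ref{t2b}. The only cosmetic differences are that the paper obtains its null sequence by rescaling elements of a Hamel basis drawn from the unit ball and drops a single basis vector from $S$ (making $\ker S$ one-dimensional), which sidesteps your case split on whether the extension adds new basis vectors.
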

\begin{proof}
Let $\mathcal{B}_N:=\{x\in X:\ N(x)\leq 1\}$ be the unit ball in our normed space. Since ${\rm span}\,\mathcal{B}_N = X$, we consult
Corollary 4.2.2 in \cite{K} which ensures us that $\mathcal{B}_N$
contains a Hamel basis $H$ for $X$. Now, fix a sequence
$\{h_1,h_2,h_3,\ldots\}$ of distinct elements in $H$ and set
$$
g_n=\frac{h_n}{n},\qquad n=1,2,3,\ldots.
$$
Then
$$
B :=\{g_1, g_2, g_3,\ldots\}\cup \left( H\smallsetminus\{h_1,h_2,h_3,\ldots \}\right)
$$
is a Hamel basis of $X$ as well. So every $x$ in $X$ assumes a unique representation of the form
$$
x=\sum_{b\in B}\alpha_b(x)b,\qquad \alpha_b(x)\in\F,
$$
where $\{b\in B:\ \alpha_b(x)\neq 0\}$ is a finite set.

With this representation at our disposal, we can easily confirm that the real-valued functions
$$
N'(x):=\sum_{b\in B}|\alpha_b(x)|,\qquad x\in X,
$$
and
$$
S(x):=\sum_{b\in B\smallsetminus\{g_1\}}|\alpha_b(x)|,\qquad x\in X,
$$
are a norm and a proper seminorm on $X$, respectively.

We get, however, that
$$
N(g_n)=\frac{1}{n}N(h_n)\leq\frac{1}{n},\qquad n=1,2,3,\ldots,
$$
whereas,
$$
N'(g_n)=1,\qquad n=1,2,3,\ldots,
$$
and
$$
S(g_n)=1,\qquad n=2,3,4,\ldots.
$$
Consequently, $N'$ and $S$ are discontinuous at zero with respect to $N$; so by Theorem \ref{t2}\ref{t2b}, both $N'$ and $S$
are ubiquitously discontinuous in $X$ with respect to the norm topology induced by $N$ and our assertion follows.
\end{proof}

In view of Theorem \ref{t4}, it seems natural to ask whether, given a norm $N$ on $X$, there exists another norm or a proper seminorm which are ubiquitously continuous in $X$ with respect to $N$. Surely, every norm which is equivalent to $N$ provides a positive answer to the above question. If, however, we look for a non-equivalent norm which is ubiquitously continuous with respect to the topology induced by $N$, we have nothing to say, unless $X$ is complete with respect to both norms---a case discussed in Corollary \ref{c1} later on.
Turning to proper seminorms, the answer to our question is positive if somewhat involved:

\begin{theorem}\label{t5}
Let $N$ be a norm on an infinite-dimensional vector space $X$ over $\F$. Then there exists a proper seminorm $S$ which is ubiquitously continuous in $X$ with respect to $N$.
\end{theorem}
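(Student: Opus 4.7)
My strategy is to manufacture $S$ as the modulus of a suitably chosen continuous linear functional on $(X,N)$. The reason this should work is that by Theorem \ref{t2}\ref{t2a}, ubiquitous continuity of a seminorm $S$ with respect to $N$ is equivalent to continuity of $S$ at a single point, which for a seminorm in turn amounts to a domination inequality $S(x)\le C\,N(x)$ for some constant $C>0$. Such an inequality is exactly what one gets for free from a bounded linear functional.

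The plan is therefore to pick any $x_0\neq 0$ in $X$, define $f_0\colon\mathrm{span}\{x_0\}\to\F$ by $f_0(\lambda x_0)=\lambda N(x_0)$, and invoke the Hahn--Banach extension theorem---with the sublinear functional $N$ itself playing the role of the dominating majorant---to produce a linear functional $f\colon X\to\F$ satisfying $|f(x)|\le N(x)$ for all $x\in X$ and $f(x_0)=N(x_0)\neq 0$.

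Having $f$ in hand, I would set $S(x):=|f(x)|$ and verify the three seminorm axioms, which follow at once from linearity of $f$. The resulting $S$ is proper because $S(x_0)>0$, while $\ker S=\ker f$ is a hyperplane in $X$; since $X$ is infinite-dimensional, this hyperplane is both a nontrivial and a proper subspace. The bound $S(x)\le N(x)$ yields continuity of $S$ at $0$ with respect to $N$, and Theorem \ref{t2}\ref{t2a} then promotes this into ubiquitous continuity throughout $X$.

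The only genuinely nonelementary ingredient is the appeal to Hahn--Banach, which rests on the axiom of choice; but this is the same set-theoretic machinery already used, via Hamel bases, in the proof of Theorem \ref{t4}, so nothing beyond it is required. I would regard this extension step as the sole potential obstacle, though it is an entirely standard one and poses no real difficulty; every other verification is a one-line check.
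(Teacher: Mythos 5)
Your proposal is correct, but it takes a genuinely different route from the paper's. You manufacture $S$ as the modulus of a linear functional dominated by $N$ via Hahn--Banach, so that $\ker S=\ker f$ is a closed hyperplane of codimension one. The paper instead fixes a nontrivial \emph{finite-dimensional} subspace $V$, chooses an algebraic complement $U$ with $X=U\oplus V$, and sets $S(x):=\mathrm{dist}_N(u,V)$ for $x=u+v$; that is, it uses the quotient seminorm modulo $V$, so that $\ker S=V$ is finite-dimensional. Both constructions finish identically: $S\le N$ gives continuity at zero, and Theorem \ref{t2}\ref{t2a} upgrades this to ubiquitous continuity. Your version is shorter because Hahn--Banach absorbs all the verification; the one point to state carefully is that when $\F=\C$ your $f_0$ is complex-linear, so you need the seminorm-dominated (Bohnenblust--Sobczyk) form of the extension theorem to get $|f|\le N$ rather than just $\mathrm{Re}\,f\le N$. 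The paper's version is more self-contained---it needs only the existence of an algebraic complement (Hamel bases, the same device already used in Theorem \ref{t4}) together with the elementary facts that the distance to a finite-dimensional subspace is attained and vanishes only on that subspace---and it has the mild bonus of letting one prescribe $\ker S$ to be any chosen nontrivial finite-dimensional subspace, whereas your construction prescribes it to be a closed hyperplane.
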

\begin{proof}
Let $V$ be a nontrivial finite-dimensional subspace of $X$, and let $U$ be another subspace of $X$ such that $X=U\oplus V$. Furthermore, let $N'\colon U\to\R$ be the real-valued function that measures, with respect to $N$, the distance from elements in $U$ to the subspace $V$; that is,
\begin{equation}\label{e1}
N'(u):={\rm dist}_N(u,V)=\inf_{v\in V}N(u-v),\qquad u\in U.
\end{equation}
Since $V$ is finite-dimensional, the infimum in \eqref{e1} is attained, so we can write
$$
N'(u)=\min_{v\in V}N(u-v),\qquad u\in U.
$$

We shall now show that $N'$ is a norm on $U$. Indeed, if $N'(u) = 0$ for some $u\in U$, then the closedness of $V$ implies that $u\in V$; thus $u = 0$, and it follows that
$$
N'(u) > 0\quad \mbox{for all}\ u \neq 0.
$$
Moreover, for $u\in U$ and $\lambda\in F\smallsetminus\{0\}$ we get
$$
N'(\lambda u)=\min_{v\in V}N(\lambda u-v)= \min_{v\in V}N(\lambda u - \lambda v)=|\lambda|\min_{v\in V}N(u-v)=|\lambda|N'(u).
$$
And since $N'(0) = 0$, we infer that
$$
N'(\lambda u) = |\lambda| N'(u)\quad \mbox{for all}\ u \in U\ \mbox{and}\  \lambda\in F.
$$
Finally, select $u_1,u_2 \in U$. Then there exist elements $v_1,v_2\in V$ such that
$$
N'(u_1) = N(u_1-v_1)\quad \mbox{and}\quad N'(u_2) = N(u_2-v_2).
$$
Therefore,
\begin{eqnarray*}
N'(u_1+u_2)&=&\min_{v\in V}N(u_1+u_2-v)\leq N(u_1+u_2-(v_1+v_2))\\
&\leq& N(u_1-v_1) + N(u_2-v_2)=N'(u_1)+N'(u_2),
\end{eqnarray*}
and the fact that $N'$ is a norm on $U$ is in the bag.

Next, for any $x\in X$ and its unique decomposition $x = u+ v$ where $u\in U$ and $v\in V$, we put
$$
S(x):= N'(u).
$$
Since $N'$ is a norm on $U$, it is easily verified that the mapping $S\colon X\to\R$ is a seminorm on $X$.
Furthermore, since $\ker S = V$ and $V$ is a nontrivial proper subspace of $X$, we conclude that $S$ is a proper seminorm on X.

Lastly, for any $x = u+ v$ in $X$ with $u\in U$ and $v\in V$, we obtain
$$
S(x)=N'(u)=\min_{v'\in V}N(u-v')\leq N(u+v)=N(x).
$$
Hence, $S$ is majorized by $N$ on $X$, implying the continuity of $S$ at zero; so Theorem \ref{t2}\ref{t2a} forces the desired result.
\end{proof}

Falling back on Theorem \ref{t3}\ref{t3a}, we remember that every seminorm on an infinite-dimen\-sional vector space
$X$ is ubiquitously continuous with respect to some norm on $X$. {\it Assuming that a given seminorm is ubiquitously continuous with respect to two norms, we ask whether both norms are necessarily equivalent}. The following example answers this question in the negative:

\begin{example}\label{ex1}
{\rm
Let $c_{00}$ denote the familiar space of all infinite $\F$-valued sequences with finite support, i.e.,
$$
c_{00} = \{x = \{\xi_1,\xi_2,\xi_3,\ldots\}:\ \xi_i\in\F,\ \exists i_0\ \mbox{such that}\ \xi_i = 0\ \mbox{for all}\ i \geq i_0\}.
$$
Consider the proper seminorm
$$
S(x):= |\xi_1|,\qquad x=\{\xi_1,\xi_2,\xi_3,\ldots\}\in c_{00},
$$
and the two norms
$$
N_1(x):=\sum_{i=1}^{\infty}|\xi_i|,\qquad x=\{\xi_1,\xi_2,\xi_3,\ldots\}\in c_{00},
$$
$$
N_{\infty}(x):=\max_{i}|\xi_i|,\qquad x=\{\xi_1,\xi_2,\xi_3,\ldots\}\in c_{00}.
$$
Surely,
\begin{equation}\label{e2}
S(x) \leq N_{\infty}(x) \leq N_1(x)\quad \mbox{for all}\ x\in c_{00}.
\end{equation}

Now let $\{x_n\}_{n=1}^{\infty}$ be an arbitrary sequence in $c_{00}$. If either $N_1(x_n)\to 0$ or $N_{\infty}(x_n)\to 0$ as $n\to\infty$ then, by \eqref{e2}, $S(x_n)\to 0$, hence $S$ is continuous at zero with respect to $N_1$ and $N_{\infty}$; so by Theorem \ref{t2}\ref{t2a}, $S$ is ubiquitously continuous with respect to both norms.

As it is, however, $N_1$ and $N_\infty$ are non-equivalent. To justify this statement, set
$$
x_n=\{\underbrace{\frac{1}{n},\ldots,\frac{1}{n}}_{n\ \mbox{\tiny times}},0,0,0,\ldots\},\qquad n=1,2,3,\ldots.
$$
Then $N_1(x_n) =1$ for all $n$, whereas $N_{\infty}(x_n)=\frac{1}{n}\to 0$ as $n\to\infty$, and we are done.
}
\end{example}

We appeal now to the second part of Theorem \ref{t3} which tells us that every seminorm on $X$ is ubiquitously discontinuous with respect to some norm on $X$. In analogy to our previous question, {\it we assume that a given seminorm is ubiquitously discontinuous with respect to two norms on $X$, and ask whether these norms are
necessarily equivalent}. Again, the following example furnishes a negative answer:

\begin{example}\label{ex2}
{\rm
Let us resort to the space $c_{00}$ and to the non-equivalent norms $N_1$ and $N_{\infty}$ in Example \ref{ex1}. By Theorem \ref{t4}, there exists a proper seminorm $S$ which is ubiquitously discontinuous in $X$ with respect to $N_1$.
In particular, $S$ is discontinuous at zero, so for some sequence $\{x_n\}_{n=1}^{\infty}$ in $c_{00}$ we have $N_1(x_n)\to 0$ while
$S(x_n)\not\to 0$. Further, since $N_{\infty}\leq N_1$ on $c_{00}$, we get $N_{\infty}(x_n)\to 0$ which again, in view of $S(x_n)\not\to 0$, implies the discontinuity of $S$ at zero with respect to $N_{\infty}$. It thus follows that $S$ is ubiquitously discontinuous in $c_{00}$ with respect to $N_1$ and $N_{\infty}$, and our goal is achieved.
}
\end{example}

With Examples \ref{ex1} and \ref{ex2} in store, we assert that {\it the space $c_{00}$ is incomplete with respect to either of the
corresponding norms $N_1$ and $N_{\infty}$}.

To substantiate our claim, consider for example the sequence $\{x_n\}_{n=1}^{\infty}$ in $c_{00}$ where
$$
x_n=\{\frac{1}{2},\frac{1}{4},\ldots,\frac{1}{2^n},0,0,0,\ldots\},\qquad n=1,2,3,\ldots.
$$
For any positive integers $n > m$ we get
$$
N_1(x_n-x_m)=\sum_{i=m+1}^{n}\frac{1}{2^i}\leq\frac{1}{2^m}\quad \mbox{and}\quad N_{\infty}(x_n-x_m)=\frac{1}{2^{m+1}},
$$
which renders $\{x_n\}$ a Cauchy sequence with respect to both $N_1$ and $N_{\infty}$.

Now fix an arbitrary element $y = \{\eta_1,\eta_2,\ldots,\eta_k,0,0,0,\ldots \}$ in $c_{00}$. Then, for all $n$, $n > k$, we obtain
$$
N_1(x_n-y)\geq \frac{1}{2^{k+1}}\quad\mbox{and}\quad N_{\infty}(x_n-y)\geq \frac{1}{2^{k+1}}.
$$
Whence $y$ may not be the limit of $\{x_n\}$ with respect to either $N_1$ or $N_{\infty}$, and our assertion is validated.

In fact, {\it $c_{00}$ cannot be made into a Banach space, for it is incomplete with respect to any norm}. To verify
this statement we summon the canonical basis $\{e_n\}_{n=1}^{\infty}$ of $c_{00}$ where $e_n$ is the vector whose $n$-th entry is $1$ and all others vanish. Evidently, $\{e_n\}$ is a countable Hamel basis for $c_{00}$, while it is known (e.g., \cite{BB}, \cite{L}) that a Hamel basis of a (separable or not) infinite-dimensional Banach space is uncountable.

With the above observation in mind, we attend now to the case where $X$ is a complete space with respect to certain norms. Following standard nomenclature, we shall henceforth call a norm $N$ {\it complete} if $X$ is complete with respect to $N$.

\begin{theorem}\label{t6}
Let $N$ be a norm on an infinite-dimensional vector space $X$ over $\F$, such that $N$ is ubiquitously continuous with respect to two complete norms $N_1$ and $N_2$. Then $N_1$ and $N_2$ are equivalent.
\end{theorem}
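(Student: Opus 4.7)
The plan is to invoke Banach's closed graph theorem, using the norm $N$ as a bridge that forces the two topologies generated by $N_1$ and $N_2$ to be compatible.

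First I would record the standard fact that a seminorm on a normed space is continuous if and only if it is continuous at zero, and this holds if and only if it is dominated by a positive multiple of the ambient norm. Applying this to the hypothesis (together with Theorem \ref{t2}\ref{t2a}, which lets us upgrade continuity at a single point to ubiquitous continuity), we obtain constants $c_1,c_2>0$ with
\[
N(x)\le c_1 N_1(x)\quad\text{and}\quad N(x)\le c_2 N_2(x),\qquad x\in X.
\]
This reduces the job to showing $N_1$ and $N_2$ themselves are comparable in both directions.

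Next I would consider the identity map $I\colon (X,N_1)\to (X,N_2)$, which is a linear map between Banach spaces by hypothesis. The plan is to show its graph is closed and then quote the closed graph theorem to obtain boundedness, giving an inequality $N_2(x)\le C N_1(x)$; a symmetric argument (swap the roles of $N_1$ and $N_2$) then delivers equivalence. To verify closedness of the graph, take a sequence $\{x_n\}$ with $N_1(x_n)\to 0$ and $N_2(x_n-y)\to 0$ for some $y\in X$, and aim to conclude $y=0$. This is where $N$ enters crucially: continuity of $N$ with respect to $N_1$ yields $N(x_n)\to N(0)=0$, while continuity of $N$ with respect to $N_2$ yields $N(x_n)\to N(y)$. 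Hence $N(y)=0$, and since $N$ is a \emph{norm} (not merely a seminorm), this forces $y=0$.

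The argument as a whole is short; the only subtle point — and the one I would expect to be the main obstacle in a less tidy write-up — is the need for the bridging function $N$ to actually separate points, i.e.\ to be a norm rather than a seminorm. Without this, the closed-graph verification would collapse: we would only be able to conclude $y\in\ker N$, which need not be $\{0\}$. Once that point is made clean, the closed graph theorem applies to $I$ in both directions and we obtain positive constants $C,C'$ with $N_2\le C\,N_1$ and $N_1\le C'\,N_2$ on $X$, which is precisely the equivalence of $N_1$ and $N_2$.
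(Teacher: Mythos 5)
Your proof is correct. The heart of your argument --- using the bridging norm $N$ to show that an $N_1$-limit and an $N_2$-limit of the same sequence must coincide, via $N(x'-x'')\le N(x'-x_n)+N(x_n-x'')\to 0$ and positive-definiteness of $N$ --- is exactly the heart of the paper's proof. Where you differ is in how the Baire-category input is packaged: you apply the closed graph theorem directly to the identity map $I\colon (X,N_1)\to(X,N_2)$ and its inverse, using the bridge argument to verify closedness of the graph, whereas the paper introduces the auxiliary norm $N'(x):=\max\{N_1(x),N_2(x)\}$, uses the bridge argument to prove that $N'$ is \emph{complete}, and then invokes the corollary of the inverse mapping theorem that two comparable complete norms are equivalent (applied twice, since $N'$ majorizes each of $N_1$ and $N_2$). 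The two routes are formally interchangeable --- closed graph and inverse mapping are equivalent theorems --- but yours is slightly more direct, avoiding the auxiliary norm, while the paper's version isolates a reusable fact (completeness of $\max\{N_1,N_2\}$). Your closing remark that the argument hinges on $N$ separating points, and would only yield $y\in\ker N$ if $N$ were a proper seminorm, is well taken: the paper makes precisely this point by exhibiting, in Example \ref{ex4}, a proper seminorm ubiquitously continuous with respect to two non-equivalent complete norms on $l_1$.
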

\begin{proof}
Consider the norm
$$
N'(x) := \max\{N_1(x), N_2(x)\},\qquad x\in X ,
$$
and let us prove that $N'$ is complete. Select an arbitrary Cauchy sequence $\{x_n\}_{n=1}^{\infty}$ with respect to $N'$ so that
$$
N'(x_n - x_m)\to 0\quad \mbox{as}\ n,m\to\infty.
$$
Since $N'$ majorizes both $N_1$ and $N_2$, it follows that $\{x_n\}$ is a Cauchy sequence with respect to $N_1$ and $N_2$ as well.

Furthermore, since $N_1$ and $N_2$ are complete, we may exhibit elements $x',x''\in X$ such that
\begin{equation}\label{e3}
N_1(x_n - x')\to 0\quad \mbox{and}\quad N_2(x_n - x'')\to 0,\quad n\to\infty;
\end{equation}
so by the continuity of $N$ with respect to $N_1$ and $N_2$, we conclude that
$$
N(x_n - x')\to 0\quad \mbox{and}\quad N(x_n - x'')\to 0\quad \mbox{as}\ n\to\infty.
$$
Consequently,
$$
N(x' - x'') \leq N(x' - x_n ) + N(x_n - x'')\to 0\quad \mbox{as}\ n\to\infty;
$$
thus $N(x' - x'') = 0$, and we infer that $x' = x'' = \bar{x}$ for some $\bar{x}\in X$. By \eqref{e3} therefore, $N_1(x_n-\bar{x})\to 0$ and
$N_2(x_n-\bar{x})\to 0$; hence $N'(x_n-\bar{x})\to 0$ and the completeness of $N'$ is secured.

Finally, we invoke the well-known Banach inverse mapping theorem (e.g., \cite[Corollary 10.9]{Gi}), which
implies, \cite[Corollary 10.10]{Gi}, that {\it two complete norms on $X$ such that one majorizes the other, must be
equivalent}.

Indeed, since $N_1$, $N_2$ and $N'$ are complete, and as
$$
N_1(x) \leq N'(x),\quad N_2(x) \leq N'(x),\qquad x\in X,
$$
we deduce that $N_1$ and $N_2$ are each equivalent to $N'$. Whence $N_1$ and $N_2$ are equivalent, and the proof is at hand.
\end{proof}

Reflecting on Theorem \ref{t6}, we maintain that {\it the assumption of completeness of $N_1$ and $N_2$ cannot be dropped}. Indeed, revisit the space $c_{00}$ and the norms $N_1$ and $N_{\infty}$ in Example \ref{ex1}. Putting $N = N_{\infty}$ and employing the fact that $N_{\infty}\leq N_1$, we ascertain that $N$ is ubiquitously continuous with respect to both $N_1$ and $N_{\infty}$; yet, as shown in Example \ref{ex1}, $N_1$ and $N_{\infty}$ are non-equivalent.

\pagebreak

With Theorem \ref{t6} fresh in our mind, we may record the following simple proposition:

\begin{corollary}\label{c1}
Let $N_1$ and $N_2$ be two complete norms on an infinite-dimensional vector space $X$ over $\F$.
Then:
\begin{enumerate}[label={\rm (\alph*)}]
\item\label{c1a}
$N_1$ is ubiquitously continuous with respect to $N_2$ if and only if $N_1$ and $N_2$ are equivalent.
\item\label{c1b}
$N_1$ is ubiquitously discontinuous with respect to $N_2$ if and only if $N_1$ and $N_2$ are non-equivalent.
\item\label{c1c}
$N_1$ is ubiquitously continuous with respect to $N_2$ if and only if $N_2$ is ubiquitously continuous with respect to $N_1$.
\item\label{c1d}
$N_1$ is ubiquitously discontinuous with respect to $N_2$ if and only if $N_2$ is ubiquitously discontinuous with respect to $N_1$.
\end{enumerate}
\end{corollary}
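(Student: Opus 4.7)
\emph{Proof plan.} The strategy is to derive all four equivalences from Theorem~\ref{t6} together with the dichotomy in Theorem~\ref{t2}. The key observation that makes the approach work is that Theorem~\ref{t6} imposes no completeness requirement on its outer norm $N$, so I may safely take $N$ to be $N_1$ itself.

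For part~\ref{c1a}, I would attack the nontrivial forward direction as follows. Assuming $N_1$ is ubiquitously continuous with respect to $N_2$, apply Theorem~\ref{t6} with the outer norm chosen as $N_1$ and the two complete inner norms chosen as $N_1$ itself and $N_2$. Then $N_1$ is trivially (in fact Lipschitz, hence) continuous with respect to itself, and is continuous with respect to $N_2$ by hypothesis; so Theorem~\ref{t6} forces the equivalence of $N_1$ and $N_2$. The reverse direction is immediate from the estimate $|N_1(x) - N_1(y)| \leq N_1(x-y) \leq \gamma N_2(x-y)$, which makes $N_1$ Lipschitz, and therefore continuous everywhere, with respect to $N_2$.

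For part~\ref{c1b}, the key point is that parts (a) and (b) are logically complementary. For any pair of norms $N_1, N_2$ on $X$, the seminorm $N_1$ either is continuous at some point of $X$ with respect to $N_2$ or is discontinuous at every point; by Theorem~\ref{t2}\ref{t2a}--\ref{t2b} these two alternatives correspond precisely to ubiquitous continuity and ubiquitous discontinuity. So (b) is simply the contrapositive of (a).

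Parts~\ref{c1c} and~\ref{c1d} come for free. By (a), each side of the biconditional in (c) is equivalent to the symmetric condition that $N_1$ and $N_2$ are equivalent; by (b), each side of (d) is equivalent to the symmetric condition that $N_1$ and $N_2$ are non-equivalent. Symmetry settles both. There is no real obstacle in any of this; the one thing to notice is that the hypotheses of Theorem~\ref{t6} are weak enough to admit $N_1$ itself in the role of the outer norm $N$, which is what promotes this statement from a separate theorem to a genuine corollary.
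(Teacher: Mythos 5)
Your proof is correct and follows the same route as the paper: part \ref{c1a} is obtained by applying Theorem \ref{t6} with $N=N_1$, and parts \ref{c1b}--\ref{c1d} follow from \ref{c1a} via the continuity/discontinuity dichotomy of Theorem \ref{t2} and the symmetry of norm equivalence. The paper leaves those last steps to the reader; you have simply written them out.
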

\begin{proof}
To prove \ref{c1a}, set $N = N_1$. Then Theorem \ref{t6} forces the equivalence of $N_1$ and $N_2$ since $N_1$ and $N_2$ are complete, and $N$ is ubiquitously continuous with respect to both norms. Parts \ref{c1b}--\ref{c1d} are obtained from
part \ref{c1a} without much difficulty, and the corollary follows.
\end{proof}

Part \ref{c1a} of Corollary \ref{c1} tells us that a Banach space may not admit another non-equivalent complete norm which is ubiquitously continuous with respect to the original norm. {\it The question whether a Banach space may admit an incomplete norm which is non-equivalent as well as ubiquitously continuous with respect to the
original one}, is answered affirmatively by the following example.

\begin{example}\label{ex3}
{\rm
Consider the space $l_{\infty}$ of all infinite $\F$-valued bounded sequences    with the
usual norm
$$
N_{\infty}(x) := \sup_{i}|\xi_i|,\qquad x = \{\xi_1,\xi_2,\xi_3,\ldots\}\in l_{\infty}.
$$
It is well known (e.g., \cite[Proposition 1.16]{FHHMZ}) that equipped with the above norm, $l_{\infty}$ is a Banach space
over $\F$.

Define a second norm on $l_{\infty}$ by
$$
N'(x):=\sum_{i=1}^{\infty}2^{-i}|\xi_i|,\qquad x = \{\xi_1,\xi_2,\xi_3,\ldots\}\in l_{\infty}.
$$
Obviously,
$$
N'(x)\leq N_{\infty}(x),\qquad x\in l_{\infty};
$$
so $N'$ is ubiquitously continuous with respect to $N_{\infty}$.

Recall now the sequence $\{e_n\}_{n=1}^{\infty}$
where, as before, $e_n$ is the vector whose $n$-th entry is $1$ and all others vanish. Since
$$
N'(e_n)=2^{-n}\to 0\quad \mbox{as}\ n\to\infty,
$$
and
$$
N_{\infty}(e_n)=1\quad \mbox{for all}\ n=1,2,3,\ldots,
$$
the possibility that our two norms are equivalent shutters. Finally, we observe that $N'$ is incomplete on $l_{\infty}$
for otherwise, Theorem \ref{t6} would imply the equivalence of $N'$ and $N_{\infty}$, a contradiction.
}
\end{example}

Encouraged by the above example, we ask {\it whether every Banach space admits an incomplete norm which is non-equivalent as well as ubiquitously continuous with respect to the original norm}. The answer to this question remains open.

\bigskip

Bringing up Theorem \ref{t6} again, we shall show now that the role of the norm $N$ may not be replaced by a proper seminorm. More precisely, the following example will confirm that {\it if $S$ is a proper seminorm on an infinite-dimensional vector space $X$ over $\F$, such that $S$ is ubiquitously continuous with respect to two
complete norms $N_1$ and $N_2$, then $N_1$ and $N_2$ are not necessarily equivalent}.

\begin{example}\label{ex4}
{\rm
Let $l_1$ be the space of all absolutely summable $\F$-valued sequences with the familiar norm,
$$
N_1(x):=\sum_{i=1}^{\infty}|\xi_i|,\qquad x=\{\xi_1,\xi_2,\xi_3,\ldots\}\in l_1,
$$
and let $c_0$ be the space of all $\F$-valued sequences that converge to zero with
\begin{equation}\label{e4}
N_{\infty}(x):=\sup_{i}|\xi_i|, \qquad x=\{\xi_1,\xi_2,\xi_3,\ldots\}\in c_0.
\end{equation}

It is well known that furnished with the above norms, both $l_1$ and $c_0$ are separable Banach spaces (e.g., \cite[Propositions 1.16 and 1.42]{FHHMZ}). Hence (see \cite{L}), choosing Hamel bases, $B$ for $l_1$ and $B'$ for $c_0$, these bases must be of the same cardinality $\mathfrak{c}$; so there exists a bijection $f_0$ from $B$ onto $B'$. Now, as in the proof of Theorem \ref{t4}, each $x$ in $l_1$ takes on a unique representation of the form
$$
x=\sum_{b\in B}\alpha_b(x)b,\qquad \alpha_b(x)\in\F,
$$
where $\{b\in B:\ \alpha_b(x)\neq 0\}$ is a finite set. Thus
$$
f(x):=\sum_{b\in B}\alpha_b(x)f_0(b),\qquad x\in l_1,
$$
is a linear mapping from $l_1$ into $c_0$; and since $f_0$ is a bijection from $B$ onto $B'$, we leave it to the reader to
verify that $f$ is a linear bijection from $l_1$ onto $c_0$.

Next, for every $x = \{\xi_1,\xi_2,\xi_3,\ldots\}\in c_0$ we define three auxiliary mappings: The left shift
$$
L(x) := \{\xi_2,\xi_3,\xi_4,\ldots\},
$$
the right shift
$$
R(x) := \{0,\xi_1,\xi_2,\xi_3,\ldots\},
$$
and the truncation
$$
T(x) := \{\xi_1,0,0,0,\ldots \}.
$$
Obviously, $L$, $R$, and $T$ are linear mappings from $c_0$ into $c_0$. Further, since $l_1$ is a linear subspace of $c_0$,
we observe that $L$, $R$, and $T$ are linear from $l_1$ into $l_1$.

Aided by the above mappings, we define yet another map, $F\colon l_1\to c_0$, by
$$
F(x):=R(f(L(x)))+T(x),\qquad  x\in l_1.
$$
Clearly, $F$ is linear. Moreover, if $F(x) = 0$ then, by the definitions of $R$ and $T$, we note that both $R( f (L(x))) = 0$ and $T(x) = 0$. In addition, the first of these conditions implies that $f(L(x))=0$; and since $f$ is a bijection, it follows that $L(x) = 0$. This, together with \mbox{$T(x)=0$}, ensures that $x = 0$. Thus we have shown that $F(x) = 0$ implies $x = 0$, so $F$ is injective. Furthermore, selecting an arbitrary element $y\in c_0$, and setting \mbox{$x:=R(f^{-1}(L(y)))+T(y)$}, it is quite straightforward to confirm that $x$ belongs to $l_1$ and
$F(x)=y$; hence $F$ is surjective, and we conclude that $F$ is a {\it linear bijection} from $l_1$ onto $c_0$.

Introduce now the real-valued function $N_2\colon l_1\to\R$, defined by
$$
N_2(x):= N_{\infty}(F(x)),\qquad x\in l_1,
$$
where $N_{\infty}$ is the norm in \eqref{e4}. Since $F$ is a linear bijection and $N_{\infty}$ is a complete norm on $c_0$, it is
a routine matter, if somewhat tedious, to verify that $N_2$, just like $N_1$, is a complete norm on $l_1$, a task left to
the reader.

Next, consider a seminorm on $l_1$ defined by
$$
S(x):=|\xi_1|,\qquad x=\{\xi_1,\xi_2,\xi_3,\ldots \}\in l_1.
$$
Evidently, $S$ is majorized by $N_1$ on $l_1$. Further, we point out that for any $x =\{\xi_1,\xi_2,\xi_3,\ldots \}\in l_1$, the first entry of the sequence $F(x)$ is $\xi_1$, so $S$ is majorized by $N_2$ as well. It thus follows that $S$ is continuous at
zero, hence ubiquitously in $l_1$ with respect to $N_1$ and $N_2$.

As it stands now, we have displayed on $l_1$ a seminorm $S$ and two complete norms, $N_1$ and $N_2$, such that $S$ is ubiquitously continuous with respect to both norms. Hence, in order to attain our goal, it suffices to prove that $N_1$ and $N_2$ are non-equivalent.

Indeed, suppose to the contrary that $N_1$ and $N_2$ are equivalent, so that for some positive constants $\beta\leq\gamma$,
$$
\beta N_1(x) \leq N_2(x) \leq\gamma N_1(x),\qquad x\in l_1;
$$
that is,
$$
\beta N_1(x) \leq N_{\infty}(F(x)) \leq\gamma N_1(x),\qquad x\in l_1,
$$
or in other words, since $F(x - x') = F(x) - F(x')$ for all $x$ and $x'$ in $l_1$,
\begin{equation}\label{e5}
\beta N_1(x-x')\leq N_{\infty}(F(x)-F(x'))\leq\gamma N_1(x-x'),\qquad x,x'\in l_1.
\end{equation}

As we shall see, however, \eqref{e5} will imply that $F$ is an isomorphism between the Banach spaces $l_1$ and $c_0$.
And this, in turn, will lead to a contradiction since it is known (e.g., \cite[Corollary 2.1.6]{AK}) that $l_1$ and $c_0$ are
non-isomorphic.

We recall that a mapping from one Banach space to another is an {\it isomorphism} if it is a linear continuous bijection with a continuous inverse. Hence to prove that $F$ is an isomorphism between $l_1$ and $c_0$, it remains to show that $F$ and its inverse $F^{-1}$ are continuous.

To this end, let us first consider an arbitrary sequence $\{x_n\}_{n=1}^{\infty}$ in $l_1$ such that $x_n \to x'$ for some
$x'\in l_1$. Referring to the right inequality in \eqref{e5}, we get
$$
N_{\infty}(F(x_n)-F(x'))\leq\gamma N_1(x_n-x')\to 0\quad \mbox{as}\ n\to\infty.
$$
Hence $F(x_n)\to F(x')$, and it follows that $F$ is ubiquitously continuous in $l_1$.

Conversely, let $\{y_n\}_{n=1}^{\infty}$  be an arbitrary sequence in $c_0$ such that $y_n \to y'$ for some $y'\in c_0$. Since $F$ is a bijection from $l_1$ onto $c_0$, we can find a sequence $\{x_n\}_{n=1}^{\infty}$ and an element $x'$ in $l_1$ so that $y_n = F(x_n)$ and $y'=F(x')$. Whence, the left inequality in \eqref{e5} yields,
$$
\beta N_1(x_n-x')\leq N_{\infty}(F(x_n)-F(x'))\to 0\quad \mbox{as}\ n\to\infty.
$$
Thus $x_n \to x'$ or, otherwise put, $F^{-1}(y_n)\to F^{-1}(y')$; so $F^{-1}$ is ubiquitously continuous in $c_0$, and Example
\ref{ex4} is in our grasp.
}
\end{example}

As our paper draws to its end, we consider for the last time a nontrivial seminorm $S$ on an infinite-dimensional vector space $X$. Surely, the class of all norms on $X$ is the union of two distinct classes, $\mathcal{C}$ and $\mathcal{D}$, where $\mathcal{C}$ consists of all norms with respect to which $S$ is continuous, and $\mathcal{D}$ is the set of all norms with
respect to which $S$ is discontinuous. While the characterization of these two classes seems to be of interest, we have no clue on how to approach this job. Hence we leave it in the good hands of the reader as an open problem.


\end{document}